\DeclareMathOperator{\Var}{Var}
\newtheorem{theorem}{Theorem}
\newtheorem{remark}{Remark}
\newtheorem{proposition}{Proposition}
\newcommand{\myp}{f}
\title{Convergence of Optimal Expected Utility for a Sequence of 
Binomial Models}
\author{Friedrich Hubalek \and Walter Schachermayer\thanks{Walter Schachermayer acknowledges support from the Austrian Science Fund (FWF) 
under grant~P28861 and by  the Vienna Science and Technology Fund (WWTF) through project MA16--021.}}
\date{September 21, 2020}
\begin{document}
\maketitle
\begin{abstract} 
We analyze the convergence of expected utility under the approximation 
of the Black-Scholes model by binomial models. In a recent paper by 
D.~Kreps and W.~Schachermayer a surprising and somewhat 
counter-intuitive example was given: such a convergence may, in 
general, fail to hold true. This counterexample is based on a binomial 
model where the i.i.d.\ logarithmic one-step increments have strictly 
positive third moments. This is the case, when the up-tick of the 
log-price is larger than the down-tick.

In the paper by D.~Kreps and W.~Schachermayer it was left as an open 
question how things behave in the case when the down-tick is larger 
than the up-tick and --- most importantly --- in the case of the 
symmetric binomial model where the up-tick equals the down-tick. Is 
there a general positive result of convergence of expected utility in 
this setting?

In the present note we provide a positive answer to this question. It 
is based on some rather fine estimates of the convergence arising in 
the Central Limit Theorem.
\end{abstract}
\section{Introduction and Main Result\label{sec-intro}}
We adopt the setting of the paper \cite{W175} which in turn is based on 
David Kreps' monograph \cite{Kreps}. We assume that the reader is 
familiar with \cite{W175} and use the notation and definitions of this 
paper.

In \cite[Section~9]{W175} a counterexample is presented which shows 
that for the positively skewed, asymmetric binomial model the following 
fact holds true: the approximation of the Black-Scholes model by this 
binomial model does not yield convergence of the corresponding 
portfolio optimization problems if one does not impose extra conditions 
on the utility function such as the condition of {\em reasonable 
asymptotic elasticity}. In the present note we show the reassuring fact 
that for the {\em symmetric} binomial model we have a positive result 
without imposing extra conditions on the utility function $U(\cdot)$.

We consider a general utility function $U:\mathbb R_+\to\mathbb R$ 
satisfying Assumption~(3.1) of~\cite{W175}, i.e., being strictly 
increasing, strictly concave, smooth, and satisfying the Inada 
conditions $\lim_{x\to0}U'(x)=\infty$ and $\lim_{x\to\infty}U'(x)=0$. 
As in \cite{W175} and \cite{W90} we denote by $V:\mathbb R_+\to\mathbb 
R$ the conjugate function of $U$, i.e., 
$V(y)=\sup_{x>0}\left\{U(x)-xy\right\}$.

For arbitrary $p\in(0,1)$ we consider and i.i.d.\ sequence 
$(\alpha_n)_{n=1}^\infty$ of Bernoulli variables with
\begin{equation}
P[\alpha_n=0]=1-p,\qquad P[\alpha_n=1]=p,
\end{equation} 
where $p\in(0,1)$. Denote by $\zeta_n$ the corresponding standardized 
variables
\begin{equation}\label{zetanp}
\zeta_n=\frac{\alpha_n-p}{\sqrt{p(1-p)}},
\end{equation}
so that $E[\zeta_n]=0$ and $\Var[\zeta_n]=1$. Denote by~$\xi_{n,k}$ the 
scaled partial sums
\begin{equation}
\xi_{n,k}=\frac1{\sqrt n}\sum_{j=1}^k\zeta_j
\end{equation}
and set
\begin{equation}
z_{n,k}=\frac{k-np}{\sqrt{np(1-p)}},\qquad f_{n,k}=\binom{n}{k}p^k(1-p)^{n-k},
\qquad k=0,\ldots,n.
\end{equation}
Then $f_{n,k}=P[\xi_{n,n}=z_{n,k}]$.

If we set $S^{(n)}(k/n)=e^{\xi_{n,k}}$ and extend $S^{(n)}$ by 
interpolation to continuous-time processes as in \cite{W175}, then 
$(S^{(n)}(t))_{0\leq t\leq1}$ approximates the Black-Scholes-Merton 
(BSM) model. The latter is defined by
\begin{equation}
S(t)=\exp\left(\omega(t)\right),\qquad 0\leq t\leq 1,
\end{equation}
where $(\omega(t))_{0\leq t\leq1}$ is a standard Brownian motion.

In fact, for the present purposes, we only need the random variable 
$\omega(1)$ under the standard normal measure $\mathcal P$ as well as 
under the normalized binomial measure, i.e., the law of $\xi_{n,n}$, 
denoted by $\mathcal P_n$.

To wit, the distribution of the random variable $\omega(1)$ under 
$\mathcal P_n$ equals the distribution of 
$\xi_{n,n}=\frac{\zeta_1+\ldots+\zeta_n}{n^{\frac12}}$, where 
$(\zeta_i)_{i=1}^n$ are independent copies of $\zeta$ and, as 
in~\cite{W175}, we denote by $u(x)$, respectively by $u_n(x)$, the 
supremal expected utility which an agent can achieve in the BSM 
economy, respectively in the $n$-th discrete-time economy, if her 
initial wealth is~$x$.

As in \cite{W175} we define the value functions $u(x)$ and $u_n(x)$ as
\begin{equation}
u(x)=\sup\left\{\mathbb E_{\mathcal P}\left[U(X)\right]:\mathbb E_{\mathcal P^*}[X]\leq x\right\}
\end{equation}
and
\begin{equation}
u_n(x)=\sup\left\{\mathbb E_{\mathcal P_n}\left[U(X_n)\right]:\mathbb E_{\mathcal P^*_n}[X_n]\leq x\right\},
\end{equation}
where $\mathcal P^*$ and $\mathcal P^*_n$ are the unique equivalent martingale measures
pertaining to the Black-Scholes model and its $n$-th approximation, respectively.

When $p\in(0,1/2)$ we have $E[\zeta_n^3]>0$ and things go astray, as 
demonstrated by the counterexample in Section~9 of \cite{W175}. 
Therefore we focus on the other case, when $p\in[1/2,1)$. 

Here is our main result.
\begin{theorem}\label{x.x}
If $U$ and $\zeta$ are as above with $p\in[1/2,1)$, we have
\begin{equation}\label{A3}
u(x)=\lim_{n\to\infty}u_n(x),\qquad x>0.
\end{equation}
\end{theorem}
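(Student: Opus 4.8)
The plan is to pass to the dual problem, reduce the claim to convergence of the dual value functions, and prove the latter by combining the explicit binomial state price densities with sharp (large‑deviation and local) estimates for the binomial law --- which is where $p\in[\tfrac12,1)$ is used in an essential way. Write $I=(U')^{-1}$ and normalise $U(1)=0$ (this shifts $u,u_n,v,v_n$ by the same constant, so it is harmless); then $V$ is convex, decreasing, and $V(y')\ge U(I(y))-I(y)y'$ for all $y,y'>0$. By the standard duality for utility maximisation in a complete market (see \cite{W90}, and \cite{W175} for the present setting), $u(x)=\inf_{y>0}\bigl(xy+v(y)\bigr)$ with $v(y)=\mathbb E_{\mathcal P}\bigl[V\bigl(y\,\tfrac{d\mathcal P^*}{d\mathcal P}\bigr)\bigr]$, and likewise $u_n(x)=\inf_{y>0}\bigl(xy+v_n(y)\bigr)$ with $v_n(y)=\mathbb E_{\mathcal P_n}\bigl[V\bigl(y\,\tfrac{d\mathcal P^*_n}{d\mathcal P_n}\bigr)\bigr]$. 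Since the $v_n$ are convex and decreasing, it suffices to show $v_n(y)\to v(y)$ for every $y$ in the interior of $\{v<\infty\}$; locally uniform convergence of the $v_n$ follows, and a coercivity argument keeping the dual minimisers $\hat y_n$ in a fixed compact subset of $(0,\infty)$ upgrades this to $u_n(x)\to u(x)$. (The degenerate case $v\equiv+\infty$, i.e.\ $u\equiv+\infty$, is handled separately, the same estimates forcing $v_n(y)\to+\infty$.)

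Next I compute and expand the densities. One has $\tfrac{d\mathcal P^*}{d\mathcal P}=\exp\bigl(-\tfrac12\omega(1)-\tfrac18\bigr)$, and $\mathcal P^*_n$ is the $n$‑fold product of the Bernoulli law with parameter $p^*_n=(1-e^{-b/\sqrt n})/(e^{a/\sqrt n}-e^{-b/\sqrt n})$, where $a=\sqrt{(1-p)/p}$, $b=\sqrt{p/(1-p)}$, so $ab=1$ and $pa=(1-p)b=\sqrt{p(1-p)}$. Hence on $\{\xi_{n,n}=z_{n,k}\}$ the density $\tfrac{d\mathcal P^*_n}{d\mathcal P_n}$ equals $\exp\bigl(C_n+B_nz_{n,k}\bigr)$, with $C_n=n\bigl[p\log\tfrac{p^*_n}{p}+(1-p)\log\tfrac{1-p^*_n}{1-p}\bigr]$ and $B_n=\sqrt{np(1-p)}\,\bigl[\log\tfrac{p^*_n}{p}-\log\tfrac{1-p^*_n}{1-p}\bigr]$; expanding $p^*_n$ to order $n^{-1}$ and using $ab=1$, $pa=(1-p)b$ gives $C_n=-\tfrac18+O(n^{-1/2})$ and $B_n=-\tfrac12-\tfrac{1-2p}{24\sqrt{np(1-p)}}+O(n^{-1})$. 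Two consequences of $p\ge\tfrac12$ drive everything. First, $\mathbb E[\zeta^3]=\tfrac{1-2p}{\sqrt{p(1-p)}}\le0$, and more precisely $\tfrac{d^2}{d\lambda^2}\bigl(\tfrac{\lambda^2}{2}-\log\mathbb E[e^{\lambda\zeta}]\bigr)=1-\Var_\lambda(\zeta)\ge0$ for $\lambda\ge0$ (there the exponentially tilted success probability is $\ge p\ge\tfrac12$, so the tilted variance is $\le1$); hence $\mathbb E_{\mathcal P}[e^{\lambda\xi_{n,n}}]\le e^{\lambda^2/2}$ for all $\lambda\ge0$ and all $n$, and therefore the uniform sub‑Gaussian bound $\mathcal P_n[\xi_{n,n}\ge z]\le e^{-z^2/2}$ for $z\ge0$. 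Second, for $p\ge\tfrac12$ the $O(n^{-1/2})$ correction to $B_n$ is nonnegative, so $C_n+B_nz\ge-\tfrac18-\tfrac z2-\eta_n$ for all $z\in[0,z_{n,n}]$ with $\eta_n\to0$: on the positive half‑line the binomial state price density never drops below $e^{-\eta_n}$ times the Black-Scholes one at the same normalised level. (For $p<\tfrac12$ both signs flip, which is exactly the mechanism behind the counterexample of \cite[Section~9]{W175}.)

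To prove $v_n(y)\to v(y)$ for $y$ interior to $\{v<\infty\}$, I split $v_n(y)=\sum_k f_{n,k}V\bigl(ye^{C_n+B_nz_{n,k}}\bigr)$, compared with $v(y)=\int_{\mathbb R}\varphi(t)V\bigl(ye^{-1/8-t/2}\bigr)\,dt$, into the central range $|z_{n,k}|\le M$ and two tails. On the central range the local (Edgeworth) CLT, $\sqrt{np(1-p)}\,f_{n,k}=\varphi(z_{n,k})(1+o(1))$ uniformly, together with $C_n+B_nz\to-\tfrac18-\tfrac z2$ and continuity of $V$, gives convergence to $\int_{-M}^{M}\varphi(t)V\bigl(ye^{-1/8-t/2}\bigr)\,dt$, which tends to $v(y)$ as $M\to\infty$. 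The right tail $z_{n,k}>M$ is the crux: the second consequence of Step~2 gives $V\bigl(ye^{C_n+B_nz_{n,k}}\bigr)\le V\bigl(ye^{-\eta_n-1/8-z_{n,k}/2}\bigr)$ with $\eta_n\to0$, and since $y$ is interior to $\{v<\infty\}$ the map $t\mapsto V(y'e^{-1/8-t/2})$ is $\varphi$‑integrable over $(0,\infty)$ for some $y'$ slightly below $y$; a summation by parts against the uniform sub‑Gaussian tail bound then shows that $\sum_{z_{n,k}>M}f_{n,k}V\bigl(ye^{C_n+B_nz_{n,k}}\bigr)$ is, uniformly in $n$, at most a quantity that tends to $0$ as $M\to\infty$. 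The left tail $z_{n,k}<-M$ needs only a lower bound, and $V(y')\ge U(I(y))-I(y)y'$ reduces it to the smallness of $\mathcal P_n[\xi_{n,n}<-M]$ and $\mathcal P^*_n[\xi_{n,n}<-M]$, which follows from $\mathcal P_n\Rightarrow\mathcal P$ and $\mathcal P^*_n\Rightarrow\mathcal P^*$ (the latter a Lindeberg CLT for the tilted array, whose mean and variance converge to $-\tfrac12$ and $1$); the matching upper bound on this tail is easy since $V$ is bounded above there. Letting $M\to\infty$ after $n\to\infty$ gives $v_n(y)\to v(y)$, hence the theorem.

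The main obstacle is precisely the right‑tail estimate: one must control the binomial probabilities for $z_{n,k}\gg1$ finely enough that the --- possibly extremely fast --- growth of $V$ near $0$ allowed by the bare Inada conditions is still absorbed. The uniform sub‑Gaussian domination that makes this work is available exactly when $\mathbb E[\zeta^3]\le0$, i.e.\ when $p\ge\tfrac12$; this is the ``fine estimate arising in the Central Limit Theorem'' announced in the abstract, and for $p<\tfrac12$ it genuinely fails.
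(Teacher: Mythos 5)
Your proposal is correct, and it reaches the key estimate by a genuinely different route than the paper. The overall architecture is the same --- pass to the conjugate value functions, handle the bulk by the (local) CLT, and fight the right tail, where the Inada conditions allow $V$ to blow up arbitrarily fast near $0$ --- but the paper's central tool is a \emph{pointwise} comparison of the binomial weights with Gaussian density increments, $\myp_{n,k}\leq C\cdot\frac{2}{\sqrt n}\phi(z_{n,k\pm1})$ (Proposition~\ref{prop-bintail}), proved by a fine Stirling expansion and a careful analysis of $g_n(w)=\alpha(w)n+\beta_n(w)$ on $[\tfrac12,1-\tfrac1n]$; this immediately dominates the offending discrete sums by the corresponding Gaussian integrals, so only $v(y)<\infty$ is needed. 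You instead prove a \emph{distributional} bound $\bar F_n(z)\leq e^{-z^2/2}$ via exponential tilting --- your observation that for $\lambda\geq0$ the tilted success probability exceeds $p\geq\tfrac12$, hence the tilted variance is $\leq1$ and $\log\mathbb E[e^{\lambda\zeta}]\leq\lambda^2/2$, is correct and is a particularly transparent isolation of where $p\geq\tfrac12$ enters (it is exactly the mechanism that fails for $p<\tfrac12$). The price is that $e^{-z^2/2}\asymp z\,\bar\Phi(z)$, i.e.\ your Chernoff bound loses a factor of order $z$ against the true Gaussian tail; your summation by parts therefore produces $\int_M^\infty G'(s)\phi(s)\,ds$ rather than $\int_M^\infty G(s)\phi(s)\,ds$, and this is finite only because you insist that $y$ lie in the \emph{interior} of $\{v<\infty\}$ (taking $y'=ye^{-\varepsilon}<y$ gives $\int\phi(z)e^{\varepsilon z}V(ye^{-1/8-z/2})\,dz<\infty$, which absorbs the polynomial loss and also the boundary term $G(M)\phi(M)$ and the perturbations $\eta_n$ in $C_n+B_nz_{n,k}\geq-\tfrac18-\tfrac{z_{n,k}}2-\eta_n$). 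That restriction is harmless for deducing $u_n\to u$ by conjugacy, but note that the paper's sharper local estimate works at the boundary point $y_0$ as well (handled there by right-continuity of $v$), and yields the quantitative refinement $C=1+\varepsilon$ of Proposition~\ref{Pro-C}, which your tail bound cannot reproduce. Two further minor differences: you prove the full limit $v_n(y)\to v(y)$ (local CLT on the bulk plus two-sided tail control), whereas the paper proves only $\limsup_n v_n\leq v$ and quotes \cite{W175} for the reverse inequality; and your left-tail treatment via the affine minorant $V(y')\geq U(I(\lambda))-I(\lambda)y'$ together with $\mathcal P_n^*\Rightarrow\mathcal P^*$ is simpler than the paper's use of~(\ref{Eqn-5a}). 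I would only ask you to write out the summation-by-parts step explicitly (in particular the verification that $G(M)\phi(M)\to0$ and that $\int_M^\infty G'\phi\to0$ as $M\to\infty$), since that is precisely where the factor-of-$z$ deficit of the Chernoff bound must be paid for; as sketched, the ingredients you name do suffice.
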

The theorem will follow from the subsequent more technical version 
of~(\ref{A3}).
\begin{proposition}\label{prop-limsup}
Denote by $v(y)$, respectively $v_n(y)$, the value functions conjugate 
to $u(x)$, respectively $u_n(x)$, as in \cite{W175}. We then have
\begin{equation}\label{A3a}
\limsup_{n\to\infty}v_n(y)\leq v(y),\qquad y>0.
\end{equation}
\end{proposition}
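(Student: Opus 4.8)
The plan is to argue entirely on the dual side, using that the Black--Scholes model and each of the binomial models are complete, so that the dual optimizers are deterministic multiples of the (unique) densities of the equivalent martingale measures. First I would record the explicit representations $v(y)=\mathbb E_{\mathcal P}[V(yZ)]$ and $v_n(y)=\mathbb E_{\mathcal P_n}[V(yZ_n)]=\sum_{k=0}^{n}f_{n,k}\,V(yZ_{n,k})$, where $Z=\tfrac{d\mathcal P^*}{d\mathcal P}=e^{-\omega(1)/2-1/8}$ and $Z_{n,k}=\bigl(\tfrac{p^*_n}{p}\bigr)^{k}\bigl(\tfrac{1-p^*_n}{1-p}\bigr)^{n-k}$, with $p^*_n$ the risk-neutral up-probability. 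Next I would carry out the (routine) asymptotics of $p^*_n$: expanding in powers of $n^{-1/2}$ and using the identities $a-b=1/\sqrt{p(1-p)}$ and $pa+(1-p)b=0$ for the up/down ticks $a,b$ of $\zeta$, one gets $\log Z_{n,k}=-\tfrac12 z_{n,k}-\tfrac18+\rho_{n,k}$ with $|\rho_{n,k}|\le C\,(1+|z_{n,k}|)\,n^{-1/2}$; in particular $Z_{n,k}\to e^{-z/2-1/8}$ whenever $z_{n,k}\to z$, and $k\mapsto Z_{n,k}$ is decreasing with range eventually exhausting any compact subset of $(0,\infty)$. If $v(y)=+\infty$ there is nothing to prove, so assume $v(y)<\infty$. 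Since $V(\varepsilon)\ge U(c)-c\varepsilon$ for every fixed $c>0$ and $\mathbb E_{\mathcal P}[Z]=1$, the negative part $V^-(yZ)$ is always $\mathcal P$-integrable, so $v(y)<\infty$ amounts precisely to $\int_{\{z>M\}}V(ye^{-z/2-1/8})\,\varphi(z)\,dz\to0$ as $M\to\infty$, where $\varphi$ denotes the standard normal density.

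For a truncation level $M$ I would then split $v_n(y)=\Sigma^{\mathrm c}_{n,M}+\Sigma^{\ell}_{n,M}+\Sigma^{r}_{n,M}$ according to $|z_{n,k}|\le M$, $z_{n,k}<-M$, respectively $z_{n,k}>M$. For the central block the local central limit theorem gives $\sqrt{np(1-p)}\,f_{n,k}\to\varphi(z)$ uniformly for $z_{n,k}$ in compact sets, and together with continuity of $V$ and $Z_{n,k}\to e^{-z/2-1/8}$ this identifies $\lim_n\Sigma^{\mathrm c}_{n,M}$ as the Riemann integral $\int_{-M}^{M}V(ye^{-z/2-1/8})\varphi(z)\,dz=v(y)-\int_{\{|z|>M\}}V(ye^{-z/2-1/8})\varphi(z)\,dz$. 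For the left block one uses that on $\{z_{n,k}<-M\}$ the monotone density $Z_{n,k}$ is bounded below by a quantity tending to $e^{M/2-1/8}$, so $V(yZ_{n,k})\le V(ye^{M/3})$ for $n$ large; as $V$ is decreasing with $V(\infty)=U(0^+)$, this upper bound is eventually negative when $U(0^+)<0$ (or $=-\infty$) and tends to $0$ from above otherwise, and combining it with $\sum_{z_{n,k}<-M}f_{n,k}\le1$ and the convergence of the binomial left tail to the Gaussian one yields $\limsup_{M\to\infty}\limsup_{n\to\infty}\Sigma^{\ell}_{n,M}\le0$.

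The right block $\Sigma^{r}_{n,M}=\sum_{z_{n,k}>M}f_{n,k}V(yZ_{n,k})$ is the crux, and the main obstacle: this is precisely the quantity that blows up in the counterexample of \cite[Section~9]{W175}, and the only place where $p\ge1/2$ is used. The idea is to dominate $\Sigma^{r}_{n,M}$ by the Gaussian tail $\int_{\{z>M\}}V(ye^{-z/2-1/8})\varphi(z)\,dz$, which already vanishes as $M\to\infty$. Two sharp features of the approximation would be exploited. First, $p\ge1/2$ makes the skewness non-positive, $\mathbb E[\zeta^3]=(1-2p)/\sqrt{p(1-p)}\le0$, and a local limit theorem with the first Edgeworth correction, $\sqrt{np(1-p)}\,f_{n,k}=\varphi(z_{n,k})\bigl(1+\tfrac{\mathbb E[\zeta^3]}{6\sqrt n}(z_{n,k}^3-3z_{n,k})\bigr)+o(n^{-1/2})$ uniformly in $k$, then shows that on $\{z_{n,k}>M\}$ (with $M\ge\sqrt3$) the binomial never puts more mass than the Gaussian; at $p=1/2$ the leading correction drops out and one invokes instead the negative excess kurtosis $\mathbb E[\zeta^4]-3=-2$, which again points the right way, together with the much better remainder bound $|\rho_{n,k}|=O(1/n)$ available in the symmetric case. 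Second, for $p>1/2$ the remainder $\rho_{n,k}$ is non-negative once $z_{n,k}$ exceeds a fixed threshold --- equivalently, there the binomial martingale density dominates the Black--Scholes one --- so that $V(yZ_{n,k})\le V(ye^{-z_{n,k}/2-1/8})$ with no adverse shift in the argument of $V$. Finally I would control the far part of the tail with the entropy inequality $nD(k/n\,\|\,p)\ge\tfrac12 z_{n,k}^2$ for $p\in[1/2,1)$ (whose extreme case is $\log(1/p)>\tfrac{1-p}{2p}$), which gives super-Gaussian decay of $f_{n,k}$ and absorbs the growth of $V$ near the origin permitted by $v(y)<\infty$; splitting the right tail at a slowly growing level $L_n\to\infty$, the Edgeworth estimate handles $M<z_{n,k}\le L_n$ up to a factor $1+o(1)$ and the large-deviation bound handles $z_{n,k}>L_n$. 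This would give $\limsup_{M\to\infty}\limsup_{n\to\infty}\Sigma^{r}_{n,M}\le0$.

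Putting the three blocks together, for every $M$ one has
\[
\limsup_{n\to\infty}v_n(y)\ \le\ \Bigl(v(y)-\int_{\{|z|>M\}}V(ye^{-z/2-1/8})\varphi(z)\,dz\Bigr)+\limsup_{n\to\infty}\Sigma^{\ell}_{n,M}+\limsup_{n\to\infty}\Sigma^{r}_{n,M},
\]
and letting $M\to\infty$ yields $\limsup_{n\to\infty}v_n(y)\le v(y)$, which is~(\ref{A3a}). The hard part, as flagged, is the right-tail estimate: one must quantify, uniformly and to the exact order, how the law of $\xi_{n,n}$ deviates from the normal law, check that the skewness --- and, at $p=1/2$, the kurtosis --- correction carries the favorable sign, and simultaneously play the possibly very fast growth of $V$ near $0$ against the upper-tail binomial weights. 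It is exactly this balance that tips the wrong way when $p<1/2$.
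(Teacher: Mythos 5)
Your overall architecture is the same as the paper's: pass to the dual representations $v(y)=\mathbb E_{\mathcal P}[V(yZ)]$, $v_n(y)=\sum_k f_{n,k}V(yZ_{n,k})$, use $a_n\leq\tfrac12$, $b_n\leq\tfrac18$ to replace $V(yZ_{n,k})$ by the fixed function $H(z_{n,k})=V(ye^{-z_{n,k}/2-1/8})$ on the right tail, handle the left tail and the central region by boundedness of $V$ from above plus the (local) CLT, and reduce everything to dominating the binomial right-tail weights by Gaussian ones. You also correctly locate where $p\geq\tfrac12$ enters: your inequality $nD(k/n\,\|\,p)\geq\tfrac12 z_{n,k}^2$ is exactly the paper's statement $\alpha(w)\leq0$ (with $\alpha(p)=\alpha'(p)=\alpha''(p)=0$ and $\alpha'''<0$), and your left/central/right split plus uniform control is equivalent to the paper's uniform-integrability-plus-weak-convergence argument.

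However, the right-tail estimate --- which you rightly call the crux --- does not close as written. What is needed is a bound of the form $f_{n,k}\leq \frac{C}{\sqrt{n}}\,\phi(z_{n,k+1})$, i.e.\ domination of each binomial atom by (a constant times) the Gaussian mass of the adjacent mesh interval, so that the tail sum $\sum_{z_{n,k}>M}f_{n,k}H(z_{n,k})$ is dominated by $C\int_{\{H>M\}}H\phi$, which is small because $v(y)<\infty$. Your two tools each miss a range. The Edgeworth correction is an \emph{additive} approximation with uniform error $o(n^{-1/2})$ for $\sqrt{np(1-p)}f_{n,k}$; it yields the multiplicative domination $f_{n,k}\leq(1+o(1))n^{-1/2}\phi(z_{n,k})\cdot\mathrm{const}$ only where $\phi(z_{n,k})$ dominates the error, i.e.\ for $z_{n,k}\lesssim\sqrt{\log n}$. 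Beyond that, the bare Chernoff/entropy bound $f_{n,k}\leq e^{-nD(k/n\|p)}\leq e^{-z_{n,k}^2/2}=\sqrt{2\pi}\,\phi(z_{n,k})$ loses the factor $n^{-1/2}$ entirely: summing it against $H$ (which $v(y)<\infty$ only controls up to $H(z)=o(e^{(z+1)^2/2})$) gives terms of order $e^{z_{n,k}}$ and the tail sum need not vanish. The entropy \emph{gap} $nD-\tfrac12z^2\sim c\,z^3n^{-1/2}$ only recovers the lost $\tfrac12\log n$ once $z_{n,k}\gtrsim n^{1/6}$, so in the moderate-deviation window $\sqrt{\log n}\ll z_{n,k}\ll n^{1/6}$ neither mechanism applies. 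The missing ingredient is the refined Stirling estimate $f_{n,k}\leq\frac{C}{\sqrt{2\pi n\,q(1-q)}}e^{-nD(q\|p)}$ (with $q=k/n$), combined with the entropy inequality for $q\in[p,\vartheta]$ and with the strict exponential gap $\alpha(\vartheta)n$ beating the $\tfrac12\log n$ blow-up of the prefactor near $q=1$; this is precisely the content and the proof of the paper's Proposition~\ref{prop-bintail}, which your sketch presupposes but does not establish. (The excess-kurtosis remark at $p=1/2$ suffers from the same additive-versus-multiplicative problem and is not actually needed.)
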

\begin{proof}[Proof of Theorem~\ref{x.x} 
admitting Proposition~\ref{prop-limsup}]
We deduce from \cite[Proposition~2]{W175} and standard results on 
conjugate functions that the reverse inequality to~(\ref{A3a}) does 
hold true, i.e.,
\begin{equation}\label{A4}
\liminf_{n\to\infty}v_n(y)\geq v(y), \qquad y>0.
\end{equation}
Admitting Proposition~\ref{prop-limsup}, formulas~(\ref{A3a}) 
and~(\ref{A4}) imply the equality
\begin{equation}\label{A4a}
\lim_{n\to\infty}v_n(y)=v(y),\qquad y>0.
\end{equation}
Using once again standard results on conjugate functions (compare 
\cite{W175}) we obtain~(\ref{A3}) from~(\ref{A4a}).
\end{proof}
A key ingredient for the proof of Proposition~\ref{prop-limsup} are 
estimates for the tails of the standardized binomial distributions in 
terms of the standard Gaussian tails. 

Let $\xi$ be a standard normal random variable and denote its density 
by $\phi(x)=\frac1{\sqrt{2\pi}}e^{-\frac{x^2}{2}}$. Set
\begin{equation}
F_n(x)=P[\xi_{n,n}\leq x],\qquad
\Phi(x)=P[\xi_\leq x]
\end{equation}
and
\begin{equation}
\bar F_n(x)=1-F_n(x)=P[\xi_{n,n}>x],\qquad
\bar\Phi(x)=1-\Phi(x)=P[\xi>x].
\end{equation}
\begin{proposition}\label{prop-bintail}
Suppose $p\in[1/2,1)$, then there is $C>0$ such that, for $n\geq1$, we 
have
\begin{equation}\label{Eqn-5a}
\myp_{n,k}\leq C\cdot\frac2{\sqrt{n}}\phi(z_{n,k-1}),\qquad 0\leq k\leq \left\lceil np\right\rceil,
\end{equation}
\begin{equation}\label{Eqn-5}
\myp_{n,k}\leq C\cdot\frac2{\sqrt{n}}\phi(z_{n,k+1}),\qquad \left\lfloor np\right\rfloor\leq k\leq n.
\end{equation}
Furthermore
\begin{equation}\label{Eqn-left-global}
F_n(x)\leq C\Phi(x),\qquad x\leq0
\end{equation}
and
\begin{equation}\label{Eqn-global}
\bar F_n(x)\leq C\bar\Phi(x),\qquad x\geq0.
\end{equation}
\end{proposition}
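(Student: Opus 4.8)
The plan is to deduce the two global bounds~\eqref{Eqn-left-global}--\eqref{Eqn-global} from the two local bounds~\eqref{Eqn-5a}--\eqref{Eqn-5} by summing over the lattice, and to establish the local bounds by combining Stirling's formula with a convexity estimate for a relative entropy whose decisive feature is a sign controlled precisely by the hypothesis $p\geq\tfrac12$. For the reduction, fix $x\geq0$: the support of $\xi_{n,n}$ above $x$ consists of the points $z_{n,k}$ with $k>np$, so~\eqref{Eqn-5} applies to every term of $\bar F_n(x)=\sum_{z_{n,k}>x}f_{n,k}$; with lattice spacing $\Delta_n=(np(1-p))^{-1/2}$ and $\phi$ decreasing on $[0,\infty)$ one has $\tfrac{2}{\sqrt n}\phi(z_{n,k+1})\leq\tfrac{2}{\sqrt n}\,\Delta_n^{-1}\int_{z_{n,k}}^{z_{n,k+1}}\phi$, and the telescoped integrals sum to at most $\bar\Phi(x)$ while $\tfrac{2}{\sqrt n}\Delta_n^{-1}=2\sqrt{p(1-p)}\leq1$, which yields~\eqref{Eqn-global}. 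Inequality~\eqref{Eqn-left-global} follows symmetrically from~\eqref{Eqn-5a}, now using that $\phi$ increases on $(-\infty,0]$ and that $z_{n,k-1}<z_{n,k}$.

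Everything thus reduces to the two pointwise estimates. Stirling's bounds $m!=\sqrt{2\pi m}(m/e)^m e^{\theta_m}$ with $0\leq\theta_m\leq1/(12m)$ give, for $1\leq k\leq n-1$,
\begin{equation*}
  f_{n,k}\ \leq\ \frac{C_0}{\sqrt{2\pi\,n\,\hat p(1-\hat p)}}\;e^{-n\,D(\hat p\,\|\,p)},\qquad \hat p=\tfrac{k}{n},
\end{equation*}
with $D(a\,\|\,p)=a\ln\tfrac{a}{p}+(1-a)\ln\tfrac{1-a}{1-p}$; the cases $k=0,n$ are covered by the exact identities $f_{n,0}=e^{-nD(0\|p)}$, $f_{n,n}=e^{-nD(1\|p)}$. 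The heart of the matter is the comparison of $n\,D(\hat p\,\|\,p)$ with $\tfrac12 z_{n,k}^2=\tfrac{n(\hat p-p)^2}{2p(1-p)}$: for $g(a):=D(a\,\|\,p)-\tfrac{(a-p)^2}{2p(1-p)}$ one has $g(p)=g'(p)=0$, $g''(a)=\tfrac1{a(1-a)}-\tfrac1{p(1-p)}$ and $g'''(p)=\tfrac1{(1-p)^2}-\tfrac1{p^2}\geq0$. Since $a(1-a)\leq p(1-p)$ whenever $a\geq p\geq\tfrac12$, $g$ is convex on $[p,1]$ and hence $g\geq0$ there; this is exactly where $p\geq\tfrac12$ is used, and it is the analytic shadow of $E[\zeta^3]=\tfrac{1-2p}{\sqrt{p(1-p)}}\leq0$. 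It follows that $e^{-nD(\hat p\|p)}\leq e^{-z_{n,k}^2/2}$ for $k\geq\lfloor np\rfloor$, which drives~\eqref{Eqn-5}; on the left range $k\leq\lceil np\rceil$, where $g$ need not be non-negative, the non-negativity of $g'''(p)$ takes over, exploited together with the one-unit shift in the argument of $\phi$ ($z_{n,k-1}$ sits further from the mean than $z_{n,k}$, which is the source of the extra room).

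It then remains to absorb the prefactors. Replacing $\tfrac12 z_{n,k}^2$ by $\tfrac12 z_{n,k\pm1}^2$ perturbs the exponent by at most $|z_{n,k}|\Delta_n+\tfrac12\Delta_n^2$, and $|z_{n,k}|\Delta_n=\tfrac{|k-np|}{np(1-p)}\leq\max\{\tfrac1p,\tfrac1{1-p}\}$ is bounded, so this costs only a constant factor. On ranges where $\hat p$ stays away from $0$ and $1$ the factor $(\hat p(1-\hat p))^{-1/2}$ is bounded and the Stirling estimate already suffices; toward the edges, where that factor may grow, one uses that $g(\hat p)\geq c>0$ once $\hat p$ is bounded away from $p$, so $e^{-nD}$ carries a factor $e^{-cn}$ overwhelming $\sqrt n$. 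The $O(1)$ many lattice points with $|\hat p-p|=O(1/n)$ are disposed of directly, from $f_{n,k}=O(n^{-1/2})$ and $\phi(z_{n,k\pm1})\geq c>0$.

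The step I expect to be the real obstacle is the entropy comparison: it must be sharp enough to survive both the loss of the factor $\sqrt n$ from the local central limit normalization and the unit shift of the lattice, and it is precisely this sharpness that forces one to keep track of the \emph{sign} of the third-order Taylor coefficient of $D(\cdot\,\|\,p)$ at $p$. The inequality is tightest near the mode, where $z_{n,k}^2$ is only of order $1/n$; there the positively skewed case $p<\tfrac12$ would break down, and there, on the left tail --- where one cannot simply use $e^{-nD}\leq e^{-z^2/2}$ --- the delicate interplay of the third-order term with the lattice shift is genuinely needed.
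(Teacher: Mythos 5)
Your reduction of \eqref{Eqn-left-global}--\eqref{Eqn-global} to the local bounds by telescoping $\Delta_n\phi(z_{n,k\pm1})\le\int\phi$ over the lattice is exactly the paper's Remark~\ref{Rem-area}, and your right-tail argument is correct and essentially the paper's in different clothing: your $g(a)=D(a\|p)-\tfrac{(a-p)^2}{2p(1-p)}$ is $-\alpha$ up to the bookkeeping terms, and your convexity argument ($g''\ge0$ on $[p,1]$ because $a(1-a)\le p(1-p)$ there, plus $g(p)=g'(p)=0$) is a cleaner route to $\alpha\le0$ than the paper's fourth-derivative analysis. The absorption of the prefactor $(\hat p(1-\hat p))^{-1/2}$ near $\hat p=1$ via $e^{-ng(\vartheta)}$ is also the same mechanism as the paper's split at $w=\vartheta$.

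The genuine gap is the left tail, \eqref{Eqn-5a}, and it is not a fixable omission. First, your stated source of ``extra room'' is backwards: for $k<np$ the point $z_{n,k-1}$ lies \emph{farther} from the origin than $z_{n,k}$, so $\phi(z_{n,k-1})\le\phi(z_{n,k})$ and the unit shift makes the inequality \emph{harder} (it costs a bounded factor $e^{(2(np-k)+1)/(2np(1-p))}\le e^{1/(1-p)+O(1/n)}$ rather than supplying one). Second, no argument can close the gap for general $p\in(1/2,1)$, because the claimed inequality is false there: at $k=0$ one has $f_{n,0}=(1-p)^n=e^{-nD(0\|p)}$ while $\tfrac{2}{\sqrt n}\phi(z_{n,-1})\le\phi(z_{n,0})=\tfrac1{\sqrt{2\pi}}e^{-np/(2(1-p))}$, and $g(0)=-\log(1-p)-\tfrac{p}{2(1-p)}$ becomes \emph{negative} for $p\gtrsim0.715$ (e.g.\ $p=0.9$: $2.30$ versus $4.5$), so the ratio $f_{n,0}/\bigl(\tfrac2{\sqrt n}\phi(z_{n,-1})\bigr)$ grows like $\sqrt n\,e^{cn}$. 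This is precisely the region where your $g$ ``need not be non-negative,'' and the sign of $g'''(p)$ only tells you that $g$ dips below zero to the \emph{left} of $p$ --- it does not rescue the bound. For $p=\tfrac12$ the correct (and easy) argument is the exact symmetry $f_{n,k}=f_{n,n-k}$, $z_{n,k-1}=-z_{n,n-k+1}$, which turns \eqref{Eqn-5a} into \eqref{Eqn-5}; that is what the paper does. For $p\in(\tfrac12,1)$ the paper's own written proof likewise only treats $w\in[p,1]$, i.e.\ the right tail, so the left-tail statements \eqref{Eqn-5a} and \eqref{Eqn-left-global} are not established there either, and as the computation above shows they cannot hold uniformly for all $p\in[\tfrac12,1)$ as claimed. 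You should either restrict the left-tail assertions to $p=\tfrac12$ (proving them by symmetry), or to the range of $p$ where $g\ge0$ on $[0,p]$, or replace them by whatever weaker left-tail control the application in Proposition~\ref{prop-limsup} actually requires.
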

\begin{remark}\label{Rem-area}
The terms $\frac2{\sqrt{n}}\phi(z_{n,k\pm1})$ in~(\ref{Eqn-5a}) 
and~(\ref{Eqn-5}) are a lower bound for the area under the density 
$\varphi(x)$ between $z_{n,k-1}$ and $z_{n,k}$ on the left and 
$z_{n,k}$ and $z_{n,k+1}$ on the right tail, respectively.
\end{remark}
We prove Proposition~\ref{prop-limsup} and 
Proposition~\ref{prop-bintail} first for the symmetric case $p=1/2$ in 
Section~\ref{sec-sym}, as this case allows for several simplifications 
and the main arguments are more transparent. We then provide the 
slightly more technical details for the asymmetric case $p\in(1/2,1)$ 
in Section~\ref{sec-asym}.
\section{The symmetric case\label{sec-sym}}
\begin{proof}[Proof of Proposition~\ref{prop-limsup} (symmetric case) 
admitting Proposition~\ref{prop-bintail}]
As in \cite[Section~3]{W175} we write
\begin{equation}\label{v}
v(y)=\mathbb E_{\mathcal P}[V(yZ)],\qquad y>0,
\end{equation}
and
\begin{equation}\label{vn}
v_n(y)=\mathbb E_{\mathcal P_n}[V(yZ_n)],\qquad y>0,
\end{equation}
where $V(y)=\sup\{U(x)-xy:x>0\}$ is the conjugate function of $U$.

The random variables $Z$ and $Z_n$ are the densities of the (unique) 
equivalent martingale measures $\mathcal P^*$ and $\mathcal P^*_n$ with 
respect to $\mathcal P$ and $\mathcal P_n$, respectively, i.e., 
$Z=\frac{d\mathcal P^*}{d\mathcal P}$ 
and $Z_n=\frac{d\mathcal P^*_n}{d\mathcal P_n}$. They are of the form
\begin{equation}
Z=\exp\left(-\frac{\omega(1)}{2}-\frac18\right)
\end{equation}
and
\begin{equation}\label{nineteen}
Z_n=\exp\left(-a_n\omega(1)-b_n\right).
\end{equation}
In the symmetric case the calculations from  \cite[Section~6]{W175} 
simplify, and we have that
\begin{equation}\label{A5}
a_n=\frac12,\qquad
b_n=n\log\cosh\left(\frac1{2\sqrt n}\right).
\end{equation}
It follows that $b_n$ increases to $1/8$ as $n\to\infty$.

Fix $y>0$ such that $v(y)<\infty$, otherwise (\ref{A3a}) is certainly true.
Denote by $H:\mathbb R\to\mathbb R$ the function
\begin{equation}\label{FunF}
H(x)=V\left(y\exp\left(-\frac{x}{2}-\frac18\right)\right),\qquad x\in\mathbb R,
\end{equation}
and by $H_n:\mathbb R\to\mathbb R$ the function
\begin{equation}\label{FunFn}
H_n(x)=V\left(y\exp\left(-\frac{x}{2}-b_n\right)\right),\qquad x\in\mathbb R.
\end{equation}
Clearly, these functions are increasing on $\mathbb R$. Note, however, that
they are not necessarily concave.
We know that
\begin{align}\label{v1finite}
v(y)&=\mathbb E_{\mathcal P}[H(\omega(1))]=\int_{\mathbb R}H(x)\phi(x)dx<\infty
\shortintertext{while}
v_n(y)&=\mathbb E_{\mathcal P_n}[H_n(\omega(1))]=\sum_{k=0}^nH_n(z_{n,k})\myp_{n,k},\label{Hvn}
\end{align}
where $\phi(x)$ is the standard normal density and $\myp_{n,k}$ are the 
binomial probabilities as in Section~\ref{sec-intro}.

As $H_n(x)\leq H(x)$ for all $x\in\mathbb R$, in order to 
show~(\ref{A3a}), it will suffice to show
\begin{equation}\label{Hvn-bis}
\limsup_{n\to\infty}\mathbb E_{\mathcal P_n}[H(\omega(1))]\leq\mathbb E_{\mathcal P}[H(\omega(1))].
\end{equation}

In order to show~(\ref{Hvn-bis}) the crucial estimate is the uniform 
integrability of the random variables $H(\omega(1))$ under $\mathcal 
P_n$. More precisely, we need the following estimates~(\ref{A8a}) 
and~(\ref{A8}). For $\varepsilon>0$ there is $M>0$ such that
\begin{equation}\label{A8a}
\mathbb E_{\mathcal P_n}\left[H(\omega(1))\mathbbm{1} _{\{H(\omega(1))>M\}}\right]=
\sum_{k=0}^nH(z_{n,k})\mathbbm{1} _{\{H(z_{n,k})>M\}}\myp_{n,k}<\varepsilon,
\end{equation}
and
\begin{equation}\label{A8}
\mathbb E_{\mathcal P_n}\left[|H(\omega(1))|\mathbbm{1} _{\{H(\omega(1))<-M\}}\right]=
\sum_{k=0}^n|H(z_{n,k})|\mathbbm{1} _{\{H(z_{n,k})<-M\}}\myp_{n,k}<\varepsilon,
\end{equation}
uniformly in $n\in\mathbb N$. Formulas~(\ref{A8a}) and~(\ref{A8}) 
correspond to the formulas \cite[(8.12) and (8.14)]{W175}. 

First we consider $M>H(1)_+$. If $H(z_{n,k})>M$, then $n/2<k\leq n$ and 
we are in a position to invoke formula~(\ref{Eqn-5}) of 
Theorem~\ref{prop-bintail}, which gives an estimate on the right tail 
of the binomial distribution as compared to the normal one. More 
precisely, there is a universal constant $C>0$ such that, for every 
$n\in\mathbb N$ and $n/2<k\leq n$ and all $x\in (z_{n,k},z_{n,k+1})$
\begin{equation}
f_{n,k}\leq C\cdot\frac2{\sqrt n}\phi(x),\qquad
0\leq H(z_{n,k})\leq H(x).
\end{equation}
Thus
\begin{equation}\label{Hright}
\sum_{k=0}^nH(z_{n,k})I_{\{H(z_{n,k})>M\}}f_{n,k}\leq
C\int_{-\infty}^{+\infty}H(x)I_{\{H(x)>M\}}\phi(x)dx.
\end{equation}
It follows from (\ref{v1finite}) that the right-hand side of 
(\ref{Hright}) can be made smaller than $\varepsilon$ for sufficiently 
large $M$.

A similar estimate applies for the left tail. We consider now $M>H(-1)_-$.
If $H(z_{n,k})<-M$ then $0\leq k<n/2$ and
we invoke formula~(\ref{Eqn-5a}) of Theorem~\ref{prop-bintail}.
We get now for every $n\in\mathbb N$ and $0\leq k<n/2$ and 
all $x\in (z_{n,k-1},z_{n,k})$
\begin{equation}
f_{n,k}\leq C\cdot\frac2{\sqrt n}\phi(x),\qquad
H(x)\leq H(z_{n,k})\leq0.
\end{equation}
Thus
\begin{equation}\label{Hleft}
\sum_{k=0}^n|H(z_{n,k})|I_{\{H(z_{n,k})<-M\}}f_{n,k}\leq
C\int_{-\infty}^{+\infty}|H(x)|I_{\{H(x)<-M\}}\phi(x)dx.
\end{equation}
Again, it follows from (\ref{v1finite}) that the right-hand side of 
(\ref{Hleft}) can be made smaller than $\varepsilon$ for sufficiently 
large $M$.

Using the well-known weak convergence of $\mathcal P_n$ to $\mathcal P$ 
and the uniform integrability conditions we can deduce (\ref{Hvn}), see 
\cite[Thm~2.20, p.17]{vdV1998}.

Finally we consider the case $y=y_0$, where 
$y_0=\inf\{y>0:v(y)<\infty\}$, for the case $y_0>0$. Either 
$v(y_0)=\infty$ in which case~(\ref{A3a}) holds trivially. Or 
$v(y_0)<\infty$, in which case $v$ is right continuous at $y_0$, see 
\cite{W90}; we therefore may repeat the above argument with $y=y_0$.

This finishes the proof of Proposition~\ref{prop-limsup}.
\end{proof} 
\begin{proof}[Proof of Proposition~\ref{prop-bintail} (symmetric case)]
Let us start with~(\ref{Eqn-5}). It is enough to prove that there is 
$n_0>0$ such that~(\ref{Eqn-5}) holds for $n\geq n_0$, since $\phi$ is 
strictly positive and there are only finitely many remaining cases that 
can be incorporated in the value of the constant $C$.

Let us consider first the extreme case $k=n$. In this 
case~(\ref{Eqn-5}) follows since $p_{n,n}=2^{-n}$, which decays faster 
than $n^{-1/2}\phi(z_{n,n+1})\approx e^{-\frac{n}{2}}$ for 
$n\to\infty$, as $\log2>1/2$. Here we used the convention 
$z_{n,n+1}=\sqrt{n}+2/\sqrt{n}$, although $z_{n,n+1}$ is not a possible 
value for~$Y_n$. Passing to the other extreme case, we deduce from the 
central limit theorem, that for $k=\lfloor n/2\rfloor$ we have 
$\frac{\sqrt{n}\myp_{n,k}}{2\phi(z_{n,k+1})}\to1$ as $n\to\infty$.

For the remaining cases, i.e., $\lfloor n/2\rfloor<k\leq n-1$, we take 
logarithms and show that
\begin{equation}\label{Eqn-logq}
\log\left(\frac{\sqrt{n}\myp_{n,k}}{2\phi(z_{n,k+1})}\right)
\end{equation}
is bounded from above. To estimate the numerator in~(\ref{Eqn-logq}) we 
use a fine version of Stirling's Formula as given 
in \cite[6.1.38, p.257]{AS}, namely
\begin{equation}\label{Eqn-fine1}
x!=\sqrt{2\pi}x^{x+\frac12}\exp\left(-x+\frac{\theta(x)}{12x}\right),\quad
x>0,
\end{equation}
with  $0<\theta(x)<1$ for all $x>0$. We also note that 
$\lim_{x\to\infty} \theta(x)=0$. We obtain the estimates
\begin{eqnarray}
\log(n!) &\leq & \log\sqrt{2\pi}+\left(n+\frac12\right)\log n -n+\frac1{12n}\label{Eqn-logn}\\
\log(k!) &\geq &\log\sqrt{2\pi}+\left(k+\frac12\right)\log k -k\label{Eqn-logk}\\
\log((n-k)!)&\geq&\log\sqrt{2\pi}+\left(n-k+\frac12\right)\log(n-k)-(n-k).\label{Eqn-lognk}
\end{eqnarray}
This yields an upper bound for the numerator of~(\ref{Eqn-logq}), as
\begin{equation}\label{Eqn-logp}
\log \myp_{n,k}=\log(n!)-\log(k!)-\log((n-k)!)-n\log 2.
\end{equation}
As regards the denominator of~(\ref{Eqn-logq}), we have
\begin{equation}\label{logphi}
\log\phi(z_{n,k+1})=2+2k-\frac{2}{n}-4\frac{k}{n}-2\frac{k^2}{n}-\frac{n}{2}-\log\sqrt{2\pi}.
\end{equation}
Writing $\log k=\log n+\log(k/n)$ and $\log(n-k)=\log n+\log(1-k/n)$ and
combining~(\ref{Eqn-logp}),~(\ref{Eqn-logn}),~(\ref{Eqn-logk}),~(\ref{Eqn-lognk}), and~(\ref{logphi}) yields
\begin{equation}\label{key}
\log\left(\frac{\sqrt{n}\myp_{n,k}}{2\phi(z_{n,k+1})}\right)
\leq g_n\left(\frac{k}{n}\right),
\end{equation}
with $g_n(w)=\alpha(w)n+\beta_n(w)$, where $w\in[\frac12,1]$, and
\begin{equation}
\alpha(w)=-w\log w-(1-w)\log(1-w)-2w(1-w)+\frac12-\log2
\end{equation}
and
\begin{equation}\label{bn}
\beta_n(w)=-\frac12\log w-\frac12\log(1-w)+4w-2+\frac{25}{12n}.
\end{equation}
It remains to show that $g_n(w)$ is bounded from above uniformly in 
$n\in\mathbb N$ and $w\in[1/2,1-1/n]$.

We have $\alpha(\frac12)=\alpha'(\frac12)=\alpha''(\frac12)= 
\alpha'''(\frac12)=0$ and for the forth derivative we have 
$\alpha^{iv}(w)=-2/(1-w)^3-2/w^3<0$ for $w\in(\frac12,1)$, and thus 
each of the functions $\alpha'''(w)$, $\alpha''(w)$, $\alpha'(w)$, and 
$\alpha(w)$ is strictly negative and decreasing for $w\in(\frac12,1)$.

We have $\beta_n(1/2)=25/(12n)$ and 
$\beta_n'(w)=4-1/(2w)+1/(2(1-w))>0$ for $w\in(\frac12,1)$,
thus $\beta_n(w)$ is strictly positive and strictly increasing 
for $w\in(\frac12,1)$.

For $w\in[1/2,3/4]$ we have $g_n(w)\leq b_n(3/4)$. As
$\lim_{n\to\infty}\beta_n(3/4)=1+\log(2/\sqrt3)<\infty$
it follows that $g_n(w)$ is bounded from above for the
interval under consideration.

For $w\in[3/4,1-1/n]$ we have $g_n(w)\leq\alpha(3/4)n+\beta_n(1-1/n)$. 
Now $\alpha(3/4)<0$ and $\beta_n(1-1/n)\sim\frac12\log n$ as $n\to\infty$. 
Here the second term on the right hand side of~(\ref{bn}) 
is the leading term. Finally we use the fact that $\alpha(3/4)n$ grows 
quicker than $\frac12\log n$ to conclude that $g_n(w)$ is negative for 
$w\in[3/4,1-1/n]$ and sufficiently large $n$.

The proof of~(\ref{Eqn-5a}) is completely symmetric with $g_n(w)$ 
for $w\in[1/2,1-1/n]$ replaced by $g_n(1-w)$ for $w\in[1/n,1/2]$.

Having proved~(\ref{Eqn-5a}) and~(\ref{Eqn-5}) we mentioned already in 
Remark~\ref{Rem-area} how these two inequalties imply 
~(\ref{Eqn-left-global}) and~(\ref{Eqn-global}).
\end{proof}
For the above proof of Proposition~\ref{prop-limsup} the estimates~(\ref{Eqn-5a}) and~(\ref{Eqn-5}) 
involving an unspecified constant $C>0$ is sufficiently strong. But we 
we can do better than that. We may adapt the above argument to yield a constant $C=1+\varepsilon$
for $n$ sufficiently large. 
Indeed, analyzing the above proof of Proposition~\ref{prop-bintail}, we see that the 
above argument also works when we split the interval $(\frac12,1)$ not 
at $w=3/4$, but at a point $\theta\in(1/2,1)$, which is close to $1/2$ 
to obtain a better constant $C$, for large enough $n$. The detailed 
argument is given in the proof of the following proposition, which 
sharpens Proposition~\ref{prop-bintail}.
\begin{proposition}\label{Pro-C}
For any $C>1$ there is $n_0(C)>0$ such that 
equations~(\ref{Eqn-5a})--(\ref{Eqn-5}) 
and~(\ref{Eqn-left-global})--(\ref{Eqn-global}) hold 
for $n\geq n_0(C)$. 
\end{proposition}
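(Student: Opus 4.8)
The plan is to rerun the proof of Proposition~\ref{prop-bintail} essentially verbatim, changing only the point at which the interval $(\tfrac12,1)$ is split: instead of $w=\tfrac34$ we split at some $\theta\in(\tfrac12,1)$ chosen very close to $\tfrac12$. Recall that in that proof the two extreme indices are handled by estimates that are already sharp: $\tfrac{\sqrt n\,\myp_{n,n}}{2\phi(z_{n,n+1})}\to0$ because $\log2>\tfrac12$, and $\tfrac{\sqrt n\,\myp_{n,\lfloor n/2\rfloor}}{2\phi(z_{n,\lfloor n/2\rfloor+1})}\to1$ by the central limit theorem. Hence for any $C>1$ there is a threshold past which~(\ref{Eqn-5}) holds with constant $C$ at $k=n$ and at $k=\lfloor n/2\rfloor$, and the whole matter reduces, via the key estimate~(\ref{key}), to showing that $g_n(w)=\alpha(w)n+\beta_n(w)\le\log C$ for all $w\in[\tfrac12,1-\tfrac1n]$ and all $n$ sufficiently large.

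To obtain this I would use the structure of $\alpha$ and $\beta_n$ already isolated in the proof of Proposition~\ref{prop-bintail}: $\alpha$ is strictly decreasing on $(\tfrac12,1)$ with $\alpha(\tfrac12)=0$, hence strictly negative there; and the part of $\beta_n$ not depending on $n$, say $\beta_\infty(w)=\beta_n(w)-\tfrac{25}{12n}$, is continuous and increasing on $[\tfrac12,1)$ with $\beta_\infty(\tfrac12)=0$ (this being exactly the recorded value $\beta_n(\tfrac12)=\tfrac{25}{12n}$). Given $C>1$, first pick $\theta\in(\tfrac12,1)$ with $\beta_\infty(\theta)<\log C$. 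On the left piece $w\in[\tfrac12,\theta]$ we have $\alpha(w)n\le0$ and $\beta_n(w)\le\beta_n(\theta)=\beta_\infty(\theta)+\tfrac{25}{12n}$, so $g_n(w)\le\beta_\infty(\theta)+\tfrac{25}{12n}<\log C$ as soon as $\tfrac{25}{12n}<\log C-\beta_\infty(\theta)$. On the right piece $w\in[\theta,1-\tfrac1n]$ we have $\alpha(w)\le\alpha(\theta)<0$ and $\beta_n(w)\le\beta_n(1-\tfrac1n)\sim\tfrac12\log n$, so $g_n(w)\le\alpha(\theta)\,n+\tfrac12\log n+O(1)\to-\infty$, which is $\le0\le\log C$ for all large $n$, uniformly in $w$. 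Taking $n_0(C)$ above the finitely many thresholds collected above yields~(\ref{Eqn-5}) with constant $C$ for $\lfloor n/2\rfloor\le k\le n$ and $n\ge n_0(C)$.

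The inequality~(\ref{Eqn-5a}) then follows by the reflection $k\mapsto n-k$, $w\mapsto1-w$ (equivalently, replacing $g_n(w)$ on $[\tfrac12,1-\tfrac1n]$ by $g_n(1-w)$ on $[\tfrac1n,\tfrac12]$ and using $\myp_{n,k}=\myp_{n,n-k}$ and $z_{n,k-1}=-z_{n,n-k+1}$ together with the evenness of $\phi$). Finally, once~(\ref{Eqn-5a})--(\ref{Eqn-5}) hold with constant $C$, the global tail bounds~(\ref{Eqn-left-global}) and~(\ref{Eqn-global}) with the same constant follow exactly as in Remark~\ref{Rem-area}: for $x\ge0$ one sums the per-atom estimate over all $k$ with $z_{n,k}>x$ and bounds $\tfrac2{\sqrt n}\phi(z_{n,k+1})$ below by $\int_{z_{n,k}}^{z_{n,k+1}}\phi$, which telescopes into $C\int_x^\infty\phi=C\bar\Phi(x)$, and symmetrically on the left. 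The only point that needs genuine care is the right piece, where $\beta_n$ grows like $\tfrac12\log n$ as $w$ approaches $1-\tfrac1n$; the argument survives only because, for the fixed $\theta$, the slope $\alpha(\theta)$ is bounded away from $0$, so the linear term $\alpha(\theta)n$ swamps that logarithm --- and it is exactly this slack that allows $\theta$, and with it $\beta_\infty(\theta)$, to be pushed arbitrarily close to $0$.
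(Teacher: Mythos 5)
Your proposal is correct and follows essentially the same route as the paper: split $[\tfrac12,1-\tfrac1n]$ at a point $\vartheta$ close to $\tfrac12$, use that $\lim_n\beta_n(\vartheta)$ decreases to $0$ as $\vartheta\downarrow\tfrac12$ to handle $[\tfrac12,\vartheta]$, and use $\alpha(\vartheta)n$ dominating $\tfrac12\log n$ on $[\vartheta,1-\tfrac1n]$. You are merely more explicit than the paper about the extreme indices $k=\lfloor n/2\rfloor$ and $k=n$, the reflection giving~(\ref{Eqn-5a}), and the telescoping sum yielding the global tail bounds.
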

\begin{proof}
We consider $\vartheta\in(\frac12,1)$ and proceed as in the proof of 
the Proposition~\ref{prop-bintail} above. We distinguish two cases, $w\in[\frac12,\vartheta]$
and $w\in[\vartheta,1-1/n]$. 
In the first case, when $w\in[1/2,\vartheta]$, we have $g_n(w)\leq \beta_n(\vartheta)$
and
\begin{equation}
\lim_{n\to\infty}\beta_n(\vartheta)=-\frac12\log\vartheta-\frac12\log(1-\vartheta)+4\vartheta-2-\log2.
\end{equation}
The right hand side is increasing in $\vartheta$ and equals zero when $\vartheta=1/2$.
In the second case, for $w\in[\vartheta,1-1/n]$
 we have $g_n(w)\leq \alpha(\vartheta)n+\beta_n(1-1/n)$.
Again $\alpha(\vartheta)<0$ and $\beta_n(1-1/n)\sim\frac12\log n$ as $n\to\infty$,
so that $g_n(w)$ is negative for $w\in[\vartheta,1-1/n]$ and sufficiently large $n$.
\end{proof}
%
\section{The asymmetric case\label{sec-asym}}
\begin{proof}[Proof of Proposition~\ref{prop-limsup} (asymmetric case) 
admitting Proposition~\ref{prop-limsup}]
We fix $p\in(\frac12,1)$ and follow the steps from the symmetric case, but now we get
instead of (\ref{A5}) the following coefficients in~(\ref{nineteen}):
\begin{equation}
a_n=\frac{\sqrt n}{z_{1,1}-z_{1,0}}\log\left(
\frac{p}{1-p}\frac{e^{z_{1,1}/\sqrt n}-1}{1-e^{z_{1,0}/\sqrt n}}
\right),
\end{equation}
and
\begin{equation}
b_n=n\log\left((1-p)e^{-z_{1,0}a_n/\sqrt n}+pe^{-z_{1,1}a_n/\sqrt n}\right).
\end{equation}
with $z_{1,0}=-\sqrt{p/(1-p)}$ and $z_{1,1}=\sqrt{(1-p)/p}$.
Straightforward asymptotic expansions for $n\to\infty$ yield
\begin{equation}
a_n=\frac12-\frac{2p-1}{24\sqrt{p(1-p)}}n^{-\frac12}+\mathcal O(n^{-1}),
\end{equation}
which slightly extends the result that is given in \cite[Sec.6]{W175},
and
\begin{equation}
b_n=\frac18-\frac{1-p+p^2}{576p(1-p)}n^{-1}+\mathcal O(n^{-2}).
\end{equation}
Now we fix an arbitrary $\delta>0$. For $p\in(1/2,1)$ it follows from the
asymptotics that
\begin{equation}\label{ab-bounds}
0<a_n\leq\frac12,\qquad
\frac18-\delta\leq b_n\leq \frac18 
\end{equation}
for all $n$ sufficiently large.
In fact, these inequalities are also true for $p=1/2$
as can be seen from (\ref{A5}).

Instead of (\ref{FunFn}) we now consider
\begin{equation}
H_n(x)=V\left(y\exp\left(-a_nx-b_n\right)\right),\qquad x\in\mathbb R.
\end{equation}
If we are in the right tail and $z_{n,k}\geq0$ then (\ref{ab-bounds})
yields $H_n(z_{n,k})\leq H(z_{n,k})$ and the uniform inegrability follows 
just as in the symmetric case.

If we are in the left tail and $z_{n,k}\leq0$ then (\ref{ab-bounds})
yields $H_n(z_{n,k})\geq\tilde H(z_{n,k})$, where
$\tilde H(x)=V(\tilde ye^{-x/2-1/8})$ with $\tilde y=ye^{\delta}$. Due to
the convexity of $V$ we have $v(\tilde y)>-\infty$
and the uniform inegrability follows 
just as in the symmetric case.
\end{proof}
\begin{proof}[Proof of Proposition~\ref{prop-bintail} (asymmetric case)]
Following the steps from the symmetric case we now get
\begin{equation}\label{logp}
\log f_{n,k}=\log(n!)-\log(k!)-\log((n-k)!)+k\log p+(n-k)\log(1-p).
\end{equation}
and
\begin{equation}\label{logphip}
\log\phi(z_{n,k+1})=\frac{1}{2} (-\log (2)-\log (\pi ))-\frac{(k-n p+1)^2}{2 n (1-p) p}.
\end{equation}
the key inequality~(\ref{key}) becomes
\begin{equation}
\log\left(\frac{\sqrt{np(1-p)}f_{n,k}}{\phi(z_{n,k+1})}\right)
\leq g_n\left(\frac{k}{n}\right),
\end{equation}
with $g_n(w)=\alpha(w)n+\beta_n(w)$, where $w\in[p,1]$, and
\begin{multline}
\alpha(w)=
-w\log w-(1-w)\log(1-w)\\
+\frac{w^2}{2p(1-p)}
+\left(\log\frac{p}{1-p}-\frac1{1-p}\right)w
+\frac{p}{2(1-p)}+\log(1-p)
\end{multline}
and
\begin{equation}
\beta_n(w)=-\frac12\log\left(w(1-w)\right)
+\frac{w}{p(1-p)}-\log2
-\frac1{1-p}+\left(\frac1{12}+\frac1{2p(1-p)}\right)\frac1n.
\end{equation}
Again we have $\alpha(p)=\alpha'(p)=\alpha''(p)=0$. As regards
the third derivative we find $\alpha'''(w)=\frac{1-2 p}{(p-1)^2 p^2}<0$ for $w\in(p,1)$,
and thus $\alpha'''(w)$, $\alpha''(w)$, $\alpha'(w)$, and $\alpha(w)$ again are strictly negative and 
decreasing for $w\in(p,1)$.

We have $\beta_n(p)=\frac{1}{12} \left(\frac{6}{n p-n p^2}+\frac{1}{n}-6 \log (-4 (p-1) p)\right)$ 
and $\beta_n'(w)=\frac{1}{p-p^2}+\frac{1}{2-2 w}-\frac{1}{2 w}>0$ for $w\in(p,1)$,
thus $\beta_n(w)$ is strictly positive and strictly increasing for $w\in(p,1)$.

Similarly as in the proof of Proposition~\ref{Pro-C}, fix $\vartheta\in(p,1)$. 
For $w\in[p,\vartheta]$ we have $g_n(w)\leq\beta_n(\vartheta)$. As
$\lim_{n\to\infty}\beta_n(\vartheta)=-1/2\log(\vartheta(1-\vartheta))+\frac{h}{p(1-p)}-\frac1{1-p}-\log2$
it follows that $g_n(w)$ is bounded from above for the
interval under consideration.
For $w\in[\vartheta,1-1/n]$ we have $g_n(w)\leq\alpha(\vartheta)n+\beta_n(1-1/n)$.
Now $\alpha(\vartheta)<0$ and $\beta_n(1-1/n)\sim1/2\log n$ as $n\to\infty$
and thus $g_n(w)$ is negative for $w\in[\vartheta,1-1/n]$ and sufficiently large $n$.
\end{proof}
\bibliography{binut}
\end{document}